\newcommand{\abs}[1]{|#1|}
\newcommand{\floor}[1]{\left\lfloor#1\right\rfloor}
\newcommand{\len}{\ell}
\DeclareMathOperator{\diam}{diam}
\newcommand{\wt}{\widetilde}
\newcommand{\ol}{\overline}
\newtheorem{thm}{Theorem}[section]
\newtheorem{lem}[thm]{Lemma}
\begin{document}

\title{Expanders are order diameter non-hyperbolic}
\author{Anton Malyshev}
\begin{abstract}
  We show that expander graphs must have Gromov-hyperbolicity at least proportional to their diameter, with a constant of proportionality depending only on the expansion constant and maximal degree. In other words, expanders contain geodesic triangles which are $\Omega(\diam\Gamma)$-thick.
\end{abstract}

\maketitle

\section{Introduction}

An expander graph is a finite graph of bounded degree in which all sets of vertices have large boundary (see e.g., \cite{expander_survey}). A hyperbolic graph is a graph which behaves like a tree on large scales (see e.g., \cite{hyperbolic_survey}).

It is shown in~\cite{enh} that sufficiently large graphs cannot be both expanders and hyperbolic. More precisely, expander graphs $\Gamma$ satisfy
\begin{align*}
  \delta_\Gamma = \Omega\left( \frac{\log\abs\Gamma}{\log\log\abs\Gamma} \right),
\end{align*}
where $\delta_\Gamma$ is the Gromov-hyperbolicity constant of the graph $\Gamma$, i.e., the minimal $\delta$ such that all geodesic triangles in $\Gamma$ are \mbox{$\delta$-thin}.

We modify the argument slightly to improve this result to 
\begin{align*}
  \delta_\Gamma = \Omega\left( \log\abs\Gamma \right).
\end{align*}
For a more precise statement, see Theorem~\ref{thm:expanders_not_hyperbolic}.

One new consequence of this result is that a family of expanders cannot have a tree as an asymptotic cone.  

A similar result is known for vertex-transitive graphs, that is, if $\Gamma$ is a finite vertex-transitive graph, then
\begin{align*}
  \delta_\Gamma = \Omega(\diam G).
\end{align*}
See, e.g., \cite{ftrans}. This fact can be used to give an elementary argument that hyperbolic groups only have finitely many finite subgroups, up to conjugacy \cite{brady}. 

There is also a similar result for random graphs. A random \mbox{$d$-regular} graph with~$n$ vertices is asymptotically almost surely an expander, and expanders share many properties with random graphs. In \cite{geo_random}, it is shown that random graphs have large almost-geodesic cycles. (It is open whether the same holds for true geodesic cycles.) Another consequence of their argument is that a random \mbox{$d$-regular} graph with~$n$ vertices, $\Gamma$, asymptotically almost surely satisfies
\begin{align*}
  \delta_\Gamma = \frac12 \diam(\Gamma) - O(\log\log\abs{\Gamma}) = \frac12 \diam(\Gamma) (1 - o(1)).
\end{align*}
The constant $1/2$ is the best possible, since for any graph $\Gamma$, the hyperbolicity constant $\delta_\Gamma$ is bounded above by $\diam(\Gamma)/2$.

While an expander cannot be hyperbolic, it is possible that it has a high proportion of thin geodesic triangles. In \cite{manythin}, Li and Tucci construct a family of expanders in which a positive proportion of geodesics pass through a particular vertex. It follows that a positive proportion of geodesic triangles are tripods, i.e., \mbox{$0$-thin}. Indeed, their argument constructs expander families in which this proportion is arbitrarily close to~1, at the cost of worse expansion. This leaves the question: is there a fixed~$\delta$ and an expander family $(\Gamma_i)_i$, in which geodesic triangles are~\mbox{$\delta$-thin} asymptotically almost surely, i.e., in which the probability that a randomly chosen geodesic triangle in $\Gamma_i$ is~\mbox{$\delta$-thin} tends to 1 as $\abs{\Gamma_i} \to \infty$?

\section{Conventions}

A \emph{graph} is a finite undirected graph, which may have loops and repeated edges. We abuse notation and use $\Gamma$ to denote both a graph and its set of vertices. A graph~$\Gamma$ is an \emph{\mbox{$h$-expander}} if for every set $S \subseteq \Gamma$ with~\mbox{$\abs{S} \leq \abs{\Gamma}/2$}, there are at least~$h \abs{S}$ vertices in $\Gamma \setminus S$ adjacent to $S$. A graph has \emph{valency bounded by $d$} if every vertex in the graph has degree at most~$d$. We let $B_{\Gamma, p}(r)$ denote the ball in $\Gamma$ with center~$p$ and radius~$r$.

A \emph{\mbox{$\delta$-hyperbolic}} space is a geodesic metric space in which all geodesic triangles are \emph{\mbox{$\delta$-thin}}. That is, each side of a triangle is contained in \mbox{a $\delta$-neighborhood} of the other two sides. If $X$ is a geodesic metric space, and $p,q \in X$, we let $\ol{pq}$ denote a geodesic segment between $p$ and $q$. This is a slight abuse of notation because such a path need not be unique, but this does not cause any issues in our arguments.

A graph can be thought of as a geodesic space in which each edge is a segment of length $1$. A finite graph $\Gamma$ is always \mbox{$\delta$-geodesic} for some $\delta$, e.g.,~$\delta$ equal to the diameter of the graph. We write $\delta_\Gamma$ for the minimal such $\delta$.

\section{Proof of Main Result}

In~\cite{enh}, the argument proceeds by showing that, in an expander, removing a ball does not significantly affect expansion, so the diameter of the graph does not increase much. On the other hand, removing a ball in a hyperbolic graph increases distances by an amount exponential in the radius of the ball.

More precisely, let $D$ be the diameter of an expander $\Gamma$, i.e., the maximal distance between two points $p,q \in \Gamma$. Note that  $D = \Theta(\log\abs\Gamma)$. Removing a ball of radius~$\Theta(D)$ between $p$ and $q$ shrinks boundaries of balls centered at $p$ and $q$ by an additive amount $\exp(\Theta(D))$, which increases $d(p,q)$ by at most a constant factor. However, in a \mbox{$\delta$-hyperbolic} graph, removing such a ball must increase $d(p,q)$ by~$\exp(\Omega(D/\delta))$. Hence,
\begin{align*}
  D + \exp(\Omega(D/\delta)) = O(D),
\end{align*}
and therefore
\begin{align*}
  \delta = \Omega(D/\log D).
\end{align*}

The $\log D$ factor arises because we compare a multiplicative increase of $d(p,q)$ in an expander to an additive increase of $d(p,q)$ in a hyperbolic graph. We eliminate this factor by using a~\mbox{$\Theta(D)$-neighborhood} of a~\mbox{$\Theta(D)$-length} path between $p$ and $q$, instead of a ball of radius $\Theta(D)$. With an appropriate choice of constants, removing such a cylinder from an expander still changes $d(p,q)$ by at most a constant factor. However, in a \mbox{$\delta$-hyperbolic} graph, removing such a cylinder increases $d(p,q)$ by a \emph{multiplicative} factor of $\exp(\Omega(D/\delta))$. So we obtain
\begin{align*}
  D\exp(\Omega(D/\delta)) = O(D),
\end{align*}
and therefore
\begin{align*}
  \delta = \Omega(D).
\end{align*}

The key lemma, then, is the increase of distances in a hyperbolic graph caused by removing a cylinder. We prove this lemma using some elementary facts about hyperbolic spaces, whose statements and proofs we defer to section~\ref{sec:hyperbolic_lemmas}.

\begin{lem} \label{lem:longpath}
Let $X$ be a \mbox{$\delta$-hyperbolic} space, and let $p,q \in X$. Define~$D = d(p,q)$, and choose $p', q'$ on the geodesic segment $\ol{pq}$ so that the points~$p$,~$p'$,~$q'$,~$q$ are evenly spaced, that is,
\begin{align*}
  d(p,p') = d(p',q') = d(q',q) = D/3.
\end{align*}
Let $\gamma$ be a path in $X$ which does not intersect the \mbox{$R$-neighborhood} of the geodesic segment $\ol{p'q'}$. Then
  \begin{align*}
    \len(\gamma) \geq D B 2^{R/\delta}
  \end{align*}
where $B$ is a universal constant, and $R/\delta$, $D/\delta$ are taken sufficiently large.
\end{lem}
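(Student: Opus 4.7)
The plan is to invoke the classical exponential divergence of geodesics in $\delta$-hyperbolic spaces: at ``height'' $R$ above a geodesic, a unit of horizontal motion requires roughly $2^{R/\delta}$ units of path length. Thus for any path $\gamma$ from $p$ to $q$ avoiding $N_R(\ol{p'q'})$, traversing the horizontal distance $D/3$ of $\ol{p'q'}$ at this inflated rate forces $\len(\gamma) \geq (D/3) \cdot 2^{R/\delta}$ up to constants. (I read $\gamma$ as a path from $p$ to $q$; otherwise the length bound is vacuous, and this is the setting in which the lemma is actually used.)

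To formalize, I would use a coarsely well-defined nearest-point projection $\pi : X \to \ol{pq}$. Parameterize $\gamma$ by arc length; by continuity, $\pi \circ \gamma$ sweeps across $\ol{pq}$, so there exist times $t < t'$ with $\pi(\gamma(t))$ and $\pi(\gamma(t'))$ deep inside $\ol{p'q'}$ and at distance $\geq D/6$ apart. The sub-path $\gamma|_{[t,t']}$ still avoids $N_R(\ol{p'q'})$, and its endpoints project well inside $\ol{p'q'}$. It then suffices to apply a ``cylinder avoidance'' estimate: any path $\beta$ from $u$ to $v$ avoiding $N_R(\sigma)$, with $\pi(u), \pi(v)$ well inside a geodesic $\sigma$, satisfies
\begin{align*}
  \len(\beta) \;\geq\; d(\pi(u), \pi(v)) \cdot 2^{R/\delta}
\end{align*}
up to $O(\delta)$ additive errors. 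Applied to $\gamma|_{[t,t']}$, this gives $\len(\gamma) \geq (D/6) \cdot 2^{R/\delta}$, yielding the lemma with $B=1/6$ (absorbing the constant in the exponent into $\delta$ as needed).

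The cylinder avoidance estimate is a classical consequence of $\delta$-thinness, which I expect to be among the hyperbolic lemmas of Section~\ref{sec:hyperbolic_lemmas}. The standard proof has two layers. The inner layer is a ``ball avoidance'' lemma: any path avoiding $B(c,R)$ whose endpoints are at distance $\geq R$ from $c$, with $c$ approximately on the geodesic between them, has length at least $2^{R/\delta}$. This is proved by dyadic induction on $R$: splitting the path at its midpoint $m$, the triangle $u,v,m$ is $\delta$-thin, so $c \in [u,v]$ is within $\delta$ of $[u,m]$ or $[m,v]$; the corresponding half-path then avoids a ball of radius $R-\delta$ about a nearby point $c'$ on the appropriate sub-geodesic, to which induction applies, yielding the doubling $L \geq 2 \cdot 2^{(R-\delta)/\delta}$. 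The outer layer is a halving argument on the projection distance $d = d(\pi(u),\pi(v))$: split $\beta$ at a point whose projection is the midpoint of $[\pi(u),\pi(v)]$ and recurse on each half, which converts the $2^{R/\delta}$ base-case bound from ball avoidance into the $d \cdot 2^{R/\delta}$ cylinder bound by induction.

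The main technical obstacle is bookkeeping rather than conceptual: verifying the ``well inside'' condition at each inductive step (the middle-third construction provides buffers of order $D$ on each side of $\ol{p'q'}$, ample when $D/\delta$ is large), controlling the $O(\delta)$ ambiguity of the coarse projection, and absorbing the various $O(\delta)$ additive errors. All such absorptions are legitimate under the hypothesis that $R/\delta$ and $D/\delta$ are sufficiently large.
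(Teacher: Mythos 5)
Your proposal is correct and follows essentially the same route as the paper: a sweeping nearest-point projection onto $\ol{pq}$ (the paper's Lemma~\ref{lem:skeleton}), the dyadic ball-avoidance bound (Lemma~\ref{lem:balldetour}, proved exactly as your inner layer), and the fact that geodesics between points projecting far apart pass within $O(\delta)$ of the base geodesic (Lemma~\ref{lem:segment}). The only cosmetic differences are that the paper extracts the factor of $D$ via a flat subdivision of $\ol{p'q'}$ into $\Theta(D/\delta)$ spaced intervals rather than your recursive halving (yielding the same constant-factor losses you anticipate, so $B$ is smaller than $1/6$ but still universal), and it likewise reads $\gamma$ as a path from $p$ to $q$, as you correctly inferred.
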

\begin{proof}[Proof of Lemma~\ref{lem:longpath}]
  Consider the set of all pairs $(x,r)$ with $x$ on the path $\gamma$ and~$r$ on the geodesic $\ol{pq}$, for which
  \begin{eqnarray*}
    d(x, r) = \min_{s \in \ol{pq}} d(x, s),
  \end{eqnarray*}
i.e., $r$ is the nearest point of $\ol{pq}$ to $x$. By Lemma~\ref{lem:skeleton}, there is a constant~$C$ such that every interval of length $C\delta$ in $\ol{pq}$ contains some such $r$. If we let
  \begin{align*}
    N = \floor{ \frac{D/3}{C\delta + K_0\delta} },
  \end{align*}
  we can find $N$ intervals of length $C\delta$ in $\ol{p'q'}$, spaced more than $K_0\delta$ apart, and pick such a point $r$ in each interval. So, there is a collection of $N$ such pairs $(r_i,x_i)$ for which the $r_i$ are more than $K_0\delta$ apart from each other. We may rearrange these pairs so that the $x_i$ occur in order of appearance on $\gamma$.

  Since the $r_i$ are sufficiently far apart, Lemma \ref{lem:segment} guarantees that each geodesic~$\ol{x_ix_{i+1}}$ passes within $K_1\delta$ of $\ol{p'q'}$. However, the path $\gamma$ does not pass within~$R$ of~$\ol{p'q'}$. Hence, some point on $\ol{x_ix_{i+1}}$ is at distance at least~\mbox{$R - K_1\delta$} from $\gamma$. So, by Lemma \ref{lem:balldetour}, the segment of $\gamma$ between $x_i$ and $x_{i+1}$ has length at least $\delta 2^{R/\delta-K_1}$.
  
There are $N - 1$ such segments, so the total length of $\gamma$ is at least
  \begin{align*}
    \left( N - 1 \right) \delta 2^{R/\delta - K_1}
    \geq D \left( \frac{1}{3(C+ K_0)} - \frac{2}{D/\delta} \right) 2^{R/\delta}.
  \end{align*}
  Taking, e.g., $B = (4(C+K_0))^{-1}$ gives us
  \begin{align*}
    \len(\gamma) \geq D B 2^{R/\delta},
  \end{align*}
  for sufficiently large $D/\delta$.
\end{proof}

We can now prove our result by arguing that in an expander, removing such a cylinder does not increase distances too much.

\begin{thm} \label{thm:expanders_not_hyperbolic}
  For any positive real number~$h$ and any positive integer~$d$, there is a constant $C_{d,h}$ such that for any \mbox{$h$-expander} $\Gamma$ which has valency bounded by $d$, we have
  \begin{align*}
    \delta_\Gamma > C_{d,h} \log \abs{\Gamma}.
  \end{align*}
\end{thm}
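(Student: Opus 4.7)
The plan is to combine Lemma~\ref{lem:longpath} with an expander-based short-path estimate to force $\delta_\Gamma = \Omega(D)$, where $D := \diam\Gamma$ and $p, q \in \Gamma$ realize the diameter. Since $\abs{\Gamma} \leq d^{D+1}$ by the valency bound, one has $D \geq \log_d \abs{\Gamma} - 1$, so this is enough to yield the desired $\delta_\Gamma = \Omega(\log\abs{\Gamma})$. Construct $p', q'$ on $\ol{pq}$ as in Lemma~\ref{lem:longpath} and set $U := N_R(\ol{p'q'})$ with $R = \epsilon D$ for a small constant $\epsilon = \epsilon(d,h)$ to be calibrated. Lemma~\ref{lem:longpath} will then furnish the lower bound $\len(\gamma) \geq DB \cdot 2^{R/\delta_\Gamma}$ on any $\gamma$ from $p$ to $q$ avoiding $U$, while the expander structure will produce such a $\gamma$ of length merely $O(D)$; the two bounds together force $2^{R/\delta_\Gamma} = O(1)$, i.e., $\delta_\Gamma = \Omega(R) = \Omega(D)$.

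Calibrating $\epsilon$ balances $\abs{U} \leq (D/3+1) \cdot d^{R+1}$ (the number of vertices on $\ol{p'q'}$ times the valency bound on an $R$-ball) against $\abs{\Gamma} \geq (1+h)^{D/2 - O(1)}$ (from iterating the $h$-expander inequality $\abs{B_\Gamma(v, r+1)} \geq (1+h)\abs{B_\Gamma(v, r)}$ while balls remain at most half of $\Gamma$). I will choose $\epsilon$ small enough that both $\abs{U} \leq \abs{\Gamma}/4$ and $(1+h)^{D/3-R-1} \geq 2\abs{U}/h$ hold; each reduces to an inequality of the form $\epsilon \log d < c \log(1+h)$ for an absolute $c > 0$, so such an $\epsilon$ exists and depends only on $d$ and $h$.

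For the short path in $\Gamma \setminus U$, I first observe $d(p, U) \geq D/3 - R$: for any $s \in \ol{p'q'}$, the concatenation $p \to p' \to s \to q' \to q$ has length $D/3 + d(p', s) + d(s, q') + D/3 = D$, so it is a geodesic, and $d(p, s) = D/3 + d(p', s) \geq D/3$. Hence $B_\Gamma(p, D/3 - R - 1)$ is disjoint from $U$ and coincides with $B_{\Gamma \setminus U}(p, D/3 - R - 1)$; by the ball-growth estimate this set has size at least $(1+h)^{D/3 - R - 1} \geq 2\abs{U}/h$. For larger radii, set $A_r := B_{\Gamma \setminus U}(p, r)$; as long as $\abs{A_r} \leq \abs{\Gamma}/2$, the $\Gamma$-expander property supplies at least $h\abs{A_r}$ boundary neighbors in $\Gamma$, of which at most $\abs{U}$ lie in $U$, so $\abs{A_{r+1}} \geq \abs{A_r} + h\abs{A_r} - \abs{U} \geq (1 + h/2)\abs{A_r}$. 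Thus $\abs{A_r}$ exceeds $\abs{\Gamma}/2$ within $O(D)$ further steps, and the symmetric argument around $q$ then forces the two resulting sets to meet inside $\Gamma \setminus U$, yielding $d_{\Gamma \setminus U}(p, q) = O(D)$.

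Running this short path as $\gamma$ in Lemma~\ref{lem:longpath} gives $O(D) \geq DB \cdot 2^{R/\delta_\Gamma}$, hence $\delta_\Gamma = \Omega(R) = \Omega(\epsilon D)$; the ``sufficiently large'' hypothesis $R/\delta_\Gamma, D/\delta_\Gamma \gg 1$ is automatic under the contrary assumption $\delta_\Gamma < C_{d,h} \log \abs{\Gamma}$ with $C_{d,h}$ sufficiently small, since then $R/\delta_\Gamma \gtrsim \epsilon/C_{d,h}$. I expect the main obstacle to be seeding the exponential growth of $A_r$: while $\abs{A_r} < \abs{U}/h$ the expander inequality is swamped by the $-\abs{U}$ correction, so one has to exploit that $p$ lies exactly $D/3$ from the cylinder to run unobstructed $\Gamma$-ball growth up to distance $D/3 - R$ before handing off to in-complement expansion. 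The choice of $\epsilon$ is precisely this quantitative handoff.
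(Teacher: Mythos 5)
Your proposal is correct and follows essentially the same route as the paper: remove an $R$-neighborhood of the middle third $\ol{p'q'}$ with $R = \epsilon D$ calibrated so the cylinder is exponentially smaller than the balls that first reach it, run unobstructed expander growth up to radius $D/3 - R$ and then $(1+h/2)$-growth thereafter to get $d_{\Gamma\setminus U}(p,q) = O(D)$, and play this against Lemma~\ref{lem:longpath}. The only differences are cosmetic (and your calibration inequality $\epsilon\log d < c\log(1+h)$ is stated in the correct direction, whereas the paper's displayed condition on $\alpha$ has the inequality reversed relative to what its later estimate actually uses).
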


\begin{proof}
  Choose $\alpha$ small enough that 
  \begin{align*}
    (1+h)^{1/3-\alpha} < d^{\alpha} 
  \end{align*}

  Pick $p,q \in \Gamma$ such that $d(p,q)$ is divisible by $3$ and as large as possible, and let~$D = d(p,q)$. Let $p'$, $q'$ be points on the geodesic segment from $p$ to $q$ such that~$p$,~$p'$,~$q'$,~$q$ are equally spaced, i.e.\ 
  \begin{align*}
    d(p,p') = d(p',q') = d(q', q) = D/3.
  \end{align*}
  Let $C$ be the \mbox{$(\alpha D)$-neighborhood} of the geodesic segment $\ol{p'q'}$, and let $\wt \Gamma = \Gamma \setminus C$ be the graph $\Gamma$ with this cylinder removed.

  We consider the growth of the balls $B_{\wt\Gamma,p}(r)$ in $\wt\Gamma$ centered at $p$. Roughly speaking, by the time these balls reach the cylinder $C$, they are large enough that their growth is not significantly slowed down by the cylinder.

  As long as $\abs{B_{\wt\Gamma,p}(r-1)} < \abs{\Gamma}/2$, the ball $B_{\wt\Gamma,p}(r-1)$ has a large boundary in $\Gamma$. When~$r < D/3 - \alpha D$, the ball has not yet reached the cylinder $C$, so this boundary lies entirely in $\wt \Gamma$, and we have
  \begin{align*}
    \abs{B_{\wt\Gamma,p}(r)} \geq \abs{B_{\wt\Gamma,p}(r-1)}(1 + h)
  \end{align*}
  and in particular, by the time the ball reaches the cylinder, we have
  \begin{align*}
    \abs{B_{\wt\Gamma,p}( D/3 - \alpha D - 1)} \geq (1+h)^{ D/3 - \alpha D-1}.
  \end{align*}
  For larger $r$, the growth of the ball in $\wt\Gamma$ is reduced by at most the size of the cylinder, so we have
  \begin{align*}
    \abs{B_{\wt\Gamma,p}(r)} \geq \abs{B_{\wt\Gamma,p}(r-1)}(1 + h) - \abs{C}.
  \end{align*}
  Hence, as long as we still have $\abs{B_{\wt\Gamma,p}(r-1)} < \abs \Gamma / 2$, and the cylinder is small enough compared to the ball, say,
  \begin{align}\label{eqn:smcyn}
    \abs{C} < \abs{B_{\wt\Gamma,p}(r-1)} h/2,
  \end{align}
  the balls continue to grow:
  \begin{align}\label{eqn:bg}
    \abs{B_{\wt\Gamma,p}(r)} \geq \abs{B_{\wt\Gamma,p}(r-1)}(1 + h/2),
  \end{align}

  To see the necessary inequality (\ref{eqn:smcyn}), observe that the cylinder cannot be too large,
  \begin{align*}
    \abs{C} \leq (D/3+1) d^{\alpha D},
  \end{align*}
  and as we noted earlier, for $r \geq D/3 - \alpha D$, the ball has size at least
  \begin{align*}
    \abs{B_{\wt\Gamma,p}(r)} \geq (1 + h)^{D/3-\alpha D - 1}.
  \end{align*}
  Hence,
  \begin{align*}
    \abs{C} / \abs{B_{\wt\Gamma,p}(r-1)}
    &\leq
    (D/3+1) d^{\alpha D}  (1 + h)^{-D/3+\alpha D} 
    \\
    &=
    (D/3+1) \left(d^{\alpha}  (1 + h)^{-1/3+\alpha}\right)^D
  \end{align*}
  By our choice of $\alpha$, this goes to $0$ as $D \to \infty$. So when $D$ is sufficiently large, we have 
  \begin{align*}
    \abs{C} / \abs{B_{\wt\Gamma,p}(r-1)} < h/2,
  \end{align*}
  as desired.

  From equation (\ref{eqn:bg}), we have that
  \begin{align*}
    \abs{B_{\wt\Gamma,p}(r)} \geq \min\left( (1+h/2)^r, \abs \Gamma / 2 \right),
  \end{align*}
  and hence if $r \geq \log \abs \Gamma / \log (1+h/2)$, we have
  \begin{align*}
    \abs{B_{\wt\Gamma,p}(r)} \geq \abs \Gamma / 2.
  \end{align*}
  The same holds for $\abs{B_{\wt\Gamma,q}(r)}$, so $B_{\wt\Gamma,p}(r)$ and $B_{\wt\Gamma,q}(r)$ intersect. Hence,
  \begin{align*}
    d_{\wt \Gamma}(p,q) \leq 2 \log \abs \Gamma / \log(1+h/2)
  \end{align*}

  However, by Lemma~\ref{lem:longpath}, if $\Gamma$ is \mbox{$\delta$-hyperbolic} we have two possibilities. Either~$D/\delta$ is not sufficiently large, in which case and $\delta$ is bounded below by a constant times~$D$, or it is sufficiently large, and the distance between~$p$~and~$q$ increases to
  \begin{align*}
    d_{\wt \Gamma}(p,q) \geq  D B 2^{\alpha D/\delta}.
  \end{align*}
  Hence,
  \begin{align*}
    D B 2^{\alpha D/\delta} \leq 
    \log \abs \Gamma / \log(1+h/2).
  \end{align*}
  And therefore,
  \begin{align*}
    \frac D \delta \leq 
    \frac{1}{\alpha \log 2} \log \left( \frac{1}{B \log(1+h/2)} \frac{\log\abs\Gamma}{D} \right)
  \end{align*}

  However, $D$ is bounded below by $\log\abs\Gamma / \log d$, so the right hand side is is bounded above by a constant. Hence, there are constants $K, K' > 0$ such that when $\abs\Gamma$ is sufficiently large we have
  \begin{align*}
    \delta \geq K D \geq K' \log\abs\Gamma.
  \end{align*}

\end{proof}

\section{Elementary Lemmas about Hyperbolic Spaces} \label{sec:hyperbolic_lemmas}

We now state and prove the necessary basic lemmas about \mbox{$\delta$-hyperbolic} spaces. These results appear in the literature, but we include proofs for completeness.

\begin{lem}\label{lem:isoc}
  Let $X$ be a \mbox{$\delta$-hyperbolic} space, and let $x,y,z \in X$ such that
  \begin{align*}
    d(z,x) = d(z,y) = \min_{w \in \ol{xy}} d(z,w),
  \end{align*}
  that is, $x$ and $y$ are both points of minimal distance to $z$ on a geodesic segment $\ol{xy}$ between them.
  Then $d(x,y) < K\delta$, where~$K$ is some universal constant.
\end{lem}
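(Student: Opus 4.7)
The plan is to apply the $\delta$-thin triangle condition to the triangle with vertices $x$, $y$, $z$ using the midpoint of $\ol{xy}$, and then exploit the minimality hypothesis on $d(z,x)$ to force the midpoint to be close to an endpoint.

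Concretely, I would let $m$ be the midpoint of $\ol{xy}$, so that $d(x,m) = d(y,m) = d(x,y)/2$. By $\delta$-thinness of the triangle $xyz$, there is a point $m'$ either on $\ol{xz}$ or on $\ol{yz}$ with $d(m,m') \leq \delta$; after relabeling $x$ and $y$ if necessary, assume $m' \in \ol{xz}$. Since $m'$ lies on the geodesic segment $\ol{xz}$, we have the exact identity
\begin{align*}
  d(z,x) = d(z,m') + d(m',x).
\end{align*}

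I would then lower-bound each of these two terms. For the first term, the hypothesis that $x$ is a closest point on $\ol{xy}$ to $z$ gives $d(z,m) \geq d(z,x)$, and the triangle inequality yields $d(z,m') \geq d(z,m) - \delta \geq d(z,x) - \delta$. For the second term, a second application of the triangle inequality gives $d(m',x) \geq d(m,x) - \delta = d(x,y)/2 - \delta$. Substituting these into the geodesic identity gives
\begin{align*}
  d(z,x) \geq \bigl(d(z,x) - \delta\bigr) + \bigl(d(x,y)/2 - \delta\bigr),
\end{align*}
which rearranges to $d(x,y) \leq 4\delta$, so $K = 5$ (or any constant exceeding $4$) suffices.

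There is no real obstacle here; the only subtlety is remembering to use both sides of the minimality hypothesis (via $d(z,m) \geq d(z,x)$) and the geodesic additivity $d(z,x) = d(z,m') + d(m',x)$, which is what lets the argument pin down $d(x,y)$ by an additive $O(\delta)$ rather than merely a multiplicative bound.
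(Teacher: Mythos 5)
Your proof is correct and takes essentially the same route as the paper: apply the thin-triangle condition to a point of $\ol{xy}$, use geodesic additivity along the side it projects to, and invoke the minimality hypothesis to cancel $d(z,x)$. The only difference is that the paper finds a point of $\ol{xy}$ within $\delta$ of \emph{both} other sides via a continuity argument, whereas you take the midpoint and use the $x \leftrightarrow y$ symmetry to reduce to one side --- a slight simplification that yields the same bound $d(x,y) \leq 4\delta$.
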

\begin{proof}
  Since every point on $\ol{xy}$ is within $\delta$ of either $\ol{zx}$ or $\ol{zy}$, by continuity, there must be a point $w$ on $\ol{xy}$ within $\delta$ of both $\ol{zx}$ and $\ol{zy}$.
  \begin{align*}
    d(z,x) \geq d(z,w) + d(w,x) - 2\delta \geq d(z,x) + d(w,x) - 2\delta.
  \end{align*}
  Hence $d(w,x) \leq 2\delta$. Similarly, $d(w,y) \leq 2\delta$, so $d(x,y) \leq 4\delta$.
\end{proof}

\begin{lem} \label{lem:skeleton}
There is a universal constant $C$ such that the following is true.

  Let $X$ be a \mbox{$\delta$-hyperbolic} space, and let $p,q$ be points in $X$. Let $\beta$ be some path from $p$ to $q$. For any interval of length $C\delta$ in the geodesic segment $\ol{pq}$, there is a pair $(x, r)$ such that $r$ is in that interval, $x$ is on the path $\beta$, and $r$ is the closest point on $\ol{pq}$ to $x$.
\end{lem}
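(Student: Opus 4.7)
The plan is to combine an intermediate-value argument along $\beta$ with Lemma~\ref{lem:isoc}. Let $K$ be the universal constant from Lemma~\ref{lem:isoc} and set $C = K$. Given an interval $[a,b] \subseteq \ol{pq}$ of length $C\delta$, I may assume $a \neq p$ and $b \neq q$, since otherwise $p$ or $q$ lies in $[a,b]$ and is trivially its own nearest point on $\ol{pq}$. Write $[p,a]$ and $[b,q]$ for the two flanking sub-segments of $\ol{pq}$.

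Consider the continuous function
\[
  h(t) = d(\beta(t), [p,a]) - d(\beta(t), [b,q])
\]
along a continuous parametrization of $\beta$. At the endpoint where $\beta(t) = p$ we have $d(p, [p,a]) = 0$ and $d(p, [b,q]) = d(p,b) > 0$, so $h$ is strictly negative there; symmetrically $h$ is strictly positive at the endpoint where $\beta(t) = q$. By the intermediate value theorem there is a parameter $t^*$ with $h(t^*) = 0$; set $x = \beta(t^*)$, let $\rho$ be the common value $d(x,[p,a]) = d(x,[b,q])$, and pick $r_- \in [p,a]$ and $r_+ \in [b,q]$ realizing these minima.

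Let $r^*$ be a point of the sub-geodesic $\ol{r_- r_+} \subseteq \ol{pq}$ nearest to $x$. If both $r_-$ and $r_+$ realized this minimum on $\ol{r_- r_+}$, Lemma~\ref{lem:isoc} would give $d(r_-, r_+) < K\delta$, contradicting $d(r_-, r_+) \geq d(a,b) = C\delta = K\delta$. Hence $d(x, r^*) < \rho$. This rules out $r^* \in [r_-, a]$ and $r^* \in [b, r_+]$, since any such point lies in $[p,a]$ or $[b,q]$ and so is at distance at least $\rho$ from $x$; therefore $r^* \in (a,b)$. Running the same trichotomy against an arbitrary $s \in \ol{pq}$ shows $r^*$ is a nearest point of $\ol{pq}$ to $x$, so $(x, r^*)$ is the required pair.

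The only real obstacle is spotting the right way to invoke Lemma~\ref{lem:isoc}: a sub-geodesic of $\ol{pq}$ of length at least $K\delta$ cannot have both its endpoints simultaneously nearest to the same external point, and once $h$ is in place this is exactly what forces the projection of $x$ into the prescribed interval.
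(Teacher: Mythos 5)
Your proof is correct and rests on essentially the same ingredients as the paper's: Lemma~\ref{lem:isoc} plus a connectedness argument along $\beta$, which the paper phrases via the set-valued projection $t \mapsto S_t$ and a covering of $[0,1]$ by two closed sets, and which you phrase via the intermediate value theorem applied to $d(\beta(t),[p,a]) - d(\beta(t),[b,q])$. Your packaging is a little more direct and even yields the slightly better constant $C = K$ in place of the paper's $C = 2K$.
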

\begin{proof}
  We write $\alpha:[0,L] \to X$ for the parametrization of~$\ol{pq}$ by arclength. We reparametrize~$\beta$ if necessary so we have~$\beta:[0,1] \to X$, with
  \begin{align*}
    \alpha(0) = \beta(0) = p \quad \text{and} \quad \alpha(L) = \beta(1) = q.
  \end{align*}

  Consider the set of pairs 
  \begin{align*}
    S = \left\{ (t,s) \in [0,1] \times [0,L] \  : \  d(\beta(t), \alpha(s)) = \min_{s' \in [0,L]} d(\beta(t),\alpha(s')) \right\}
  \end{align*}
  The set $S$ is closed, hence compact. By Lemma~\ref{lem:isoc}, each set
  \begin{align*}
    S_t = \{ s : (t,s) \in S\}
  \end{align*}
  has diameter at most $K\delta$. Given any $s \in [0,L]$, the sets
  \begin{align*}
    \{ t \in [0,1] : S_t \cap [0,s] \neq \emptyset \} \quad \text{ and } \quad \{ t \in [0,1] : S_t \cap [s,L] \neq \emptyset \}
  \end{align*}
  are closed, nonempty, and cover $[0,1]$. So some $t$ belongs to both, that is, there is a value~$t$ such that $S_{t}$ intersects both $[0,s]$ and $[s,L]$. Since $S_{t}$ has diameter at most $K \delta$, it contains some element of~$[s - K\delta, s + K\delta]$. Thus, some element of~$[s - K\delta, s+K\delta]$ belongs to $S_t$.

  So, every closed interval of length $2K\delta$ in $[0,L]$ intersects some $S_t$, as desired.  
\end{proof}

\begin{lem} \label{lem:balldetour}
  Let $X$ be a \mbox{$\delta$-hyperbolic} space and let $\gamma$ be a path from $p$ to $q$ in $X$. If some point $r$ on $\ol{pq}$ is at distance $R$ from $\gamma$, then $\len(\gamma) \geq \delta 2^{R/\delta}$.
\end{lem}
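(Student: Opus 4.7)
The plan is induction on $R$ via a bisection of the path $\gamma$, using $\delta$-thinness of the geodesic triangle whose apex is the midpoint of $\gamma$.

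Concretely, let $L = \len(\gamma)$ and take $m$ to be the arc-length midpoint of $\gamma$, so that the two halves $\gamma_1$ from $p$ to $m$ and $\gamma_2$ from $m$ to $q$ each have length $L/2$. Pick geodesic segments $\ol{pm}$ and $\ol{mq}$; together with $\ol{pq}$ these form a geodesic triangle at $m$. By $\delta$-thinness, the point $r \in \ol{pq}$ lies within $\delta$ of some point $r'$ on $\ol{pm} \cup \ol{mq}$; without loss of generality, $r' \in \ol{pm}$. Since $\gamma_1 \subseteq \gamma$ and $r$ has distance at least $R$ from all of $\gamma$, the triangle inequality gives $d(r', \gamma_1) \geq d(r, \gamma_1) - d(r, r') \geq R - \delta$.

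Now the inductive hypothesis applies to the configuration consisting of the path $\gamma_1$ from $p$ to $m$, the geodesic $\ol{pm}$, and the distinguished point $r' \in \ol{pm}$, with reduced parameter $R - \delta$. This yields $\len(\gamma_1) \geq \delta \cdot 2^{(R-\delta)/\delta} = (\delta/2) \cdot 2^{R/\delta}$, and since $\len(\gamma) = 2\len(\gamma_1)$, we conclude $\len(\gamma) \geq \delta \cdot 2^{R/\delta}$, closing the induction. The base case only needs to cover a bounded range of $R$ where $\delta \cdot 2^{R/\delta} = O(\delta)$; it is anchored by the trivial bound $\len(\gamma) \geq d(p, q) \geq 2R$, which holds because both endpoints of $\gamma$ lie at distance at least $R$ from the point $r$ on $\ol{pq}$ (up to an innocuous adjustment of constants for the very smallest values of $R/\delta$).

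The conceptual heart of the argument, and the only essential use of hyperbolicity, is the thin-triangle step after bisection: it couples halving the path length to a decrement of $R$ by $\delta$, producing the doubling recurrence $f(R) \geq 2 f(R - \delta)$ that drives the exponential bound. I do not anticipate a substantive obstacle beyond this; the only bookkeeping is verifying that $\gamma_1$ together with $(\ol{pm}, r')$ is a valid configuration for reapplying the lemma, which it is since $\gamma_1$ is just an ordinary path with endpoints $p$ and $m$.
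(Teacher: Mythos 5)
Your proof is correct and is essentially identical to the paper's: the paper likewise bisects $\gamma$ at its arc-length midpoint $x$, uses thinness of the triangle $p,x,q$ to find a point of $\ol{px}$ at distance at least $R-\delta$ from $\gamma$, applies the inductive hypothesis to the half-path to get the doubling recurrence, and anchors the induction with the trivial bound $\len(\gamma) \geq d(p,q) \geq 2R$. The small constant slippage you flag in the base case for tiny $R/\delta$ is present in the paper's version as well.
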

\begin{proof}
  We prove this by induction on $\floor{R/\delta}$. By assumption, some point on $\ol{pq}$ is at distance $R$ from both $p$ and $q$, so $\len(\gamma) \geq d(p,q) \geq 2R$. In particular, if $\floor{R/\delta} = 0$, then $\len(\gamma) \geq 2R \geq \delta 2^{R/\delta}$.

  For the inductive step, let $x$ be the point on $\gamma$ which is halfway between~$p$~and~$q$. Then~$r \in \ol{pq}$ is within $\delta$ of either $\ol{px}$ or $\ol{xq}$. We may suppose without loss of generality that it is $\ol{px}$. Then some point on $\ol{px}$ has distance at least $R - \delta$ from~$\gamma$. It follows that~$\len(\gamma)/2 \geq \delta 2^{(R-\delta)/\delta}$, and therefore $\len(\gamma) \geq \delta 2^{R/\delta}$.
\end{proof}

\begin{lem} \label{lem:segment}
  There are universal constants $K_0$ and $K_1$ such that the following is true.

  Let $X$ be a \mbox{$\delta$-hyperbolic} space, and let $p,q,x,y$ be points in~$X$, where we suppose~\mbox{$d(p,q) > K_0 \delta$}. If the nearest points on $\ol{pq}$ to $x$ and $y$ are~$p$~and~$q$, respectively, then $\ol{xy}$ passes within $K_1 \delta$ of $\ol{pq}$.
\end{lem}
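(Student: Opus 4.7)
The plan is to exploit the $2\delta$-thinness of the geodesic quadrilateral with vertices $p,x,y,q$: splitting along the diagonal $\ol{py}$ into two $\delta$-thin triangles shows that every point of $\ol{xy}$ lies within $2\delta$ of $\ol{xp}\cup\ol{pq}\cup\ol{qy}$. I would define the closed subsets
\begin{align*}
  A = \{z\in\ol{xy} : d(z,\ol{xp})\leq 2\delta\}, \qquad B = \{z\in\ol{xy} : d(z,\ol{qy})\leq 2\delta\}.
\end{align*}
If some point of $\ol{xy}$ lies outside $A\cup B$, that point is within $2\delta$ of $\ol{pq}$ and we are done with $K_1 = 2$. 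Otherwise $A\cup B = \ol{xy}$ with $x\in A$ and $y\in B$, so connectivity of $\ol{xy}$ forces $A\cap B\neq\emptyset$, producing $u\in\ol{xp}$ and $v\in\ol{qy}$ with $d(u,v)\leq 4\delta$. The goal is then to show that such $u,v$ cannot exist when $d(p,q) > K_0\delta$ for a suitably large $K_0$.

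The key auxiliary estimate, capturing that $p$ is essentially the closest-point projection of $x$ onto $\ol{pq}$, is
\begin{align*}
  d(x,q) \geq d(x,p) + d(p,q) - O(\delta).
\end{align*}
I would prove this by applying the same connectivity argument used in Lemma~\ref{lem:isoc} and Lemma~\ref{lem:skeleton} to the thin triangle $xpq$: there is a point $w\in\ol{xq}$ simultaneously within $\delta$ of $\ol{xp}$, via some $u'\in\ol{xp}$, and within $\delta$ of $\ol{pq}$, via some $r\in\ol{pq}$, so $d(u',r)\leq 2\delta$. The hypothesis that $p$ is the closest point on $\ol{pq}$ to $x$ forces $d(u',p)\leq 2\delta$, since otherwise $d(x,r)\leq d(x,u') + d(u',r) < d(x,p)$ would contradict minimality. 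It follows that $d(r,p), d(w,p) = O(\delta)$, and expanding $d(x,q) = d(x,w)+d(w,q)$ via $p$ yields the bound. The symmetric estimate $d(y,p) \geq d(y,q) + d(p,q) - O(\delta)$ holds by the same argument with the roles of $(p,x)$ and $(q,y)$ swapped.

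In the remaining subcase, I would apply the triangle inequalities along the two broken paths $x \to u \to v \to q$ and $y \to v \to u \to p$, using the geodesic identities $d(x,u) = d(x,p)-d(u,p)$ and $d(v,q) = d(y,q)-d(y,v)$. Substituting the two auxiliary lower bounds and adding the resulting inequalities, the terms $d(u,p)$ and $d(y,v)$ cancel in pairs, leaving $2d(p,q) \leq O(\delta)$. Choosing $K_0$ above the resulting constant rules out this subcase, and $K_1 = 2$ suffices. The step I expect to be the main obstacle is the auxiliary estimate $d(x,q) \geq d(x,p) + d(p,q) - O(\delta)$; converting the closest-point-projection hypothesis into a quantitative thin-triangle argument using only Lemma~\ref{lem:isoc} requires the most care, and the final constant $K_0$ is determined by this step.
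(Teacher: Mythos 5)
Your proposal is correct. The first half coincides exactly with the paper's proof: both establish that $\ol{xy}$ lies in a $2\delta$-neighborhood of $\ol{xp}\cup\ol{pq}\cup\ol{qy}$ by splitting the quadrilateral along a diagonal, and both use a connectivity argument to conclude that either some point of $\ol{xy}$ is within $2\delta$ of $\ol{pq}$ (giving $K_1=2$) or some point is within $2\delta$ of both $\ol{xp}$ and $\ol{qy}$. Where you diverge is in deriving the contradiction in the second case. The paper works directly with the bad point $z\in\ol{xy}$: it bounds $d(x,z)\leq d(x,p)-d(z,p)+4\delta$, picks a center point $r\in\ol{pq}$ of the thin triangle $zpq$, and uses the closest-point hypothesis twice to force $d(r,p)\leq 6\delta$ and $d(r,q)\leq 6\delta$, hence $d(p,q)\leq 12\delta$. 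You instead first prove the almost-additivity estimates $d(x,q)\geq d(x,p)+d(p,q)-O(\delta)$ and $d(y,p)\geq d(y,q)+d(p,q)-O(\delta)$ from the projection hypothesis (these are correct, and your center-point argument for them is sound), and then feed the witnesses $u\in\ol{xp}$, $v\in\ol{qy}$ with $d(u,v)\leq 4\delta$ into two triangle inequalities whose sum telescopes to $2d(p,q)\leq O(\delta)$; I checked the cancellation and it does close up, giving $K_0$ on the order of $10$. Your route isolates a reusable statement (the near-additivity of distance through a closest-point projection, essentially a Gromov-product bound) at the cost of slightly more bookkeeping, while the paper's argument is more self-contained and pins down the single point $r$ that is simultaneously close to $p$ and $q$. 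Both yield universal constants of the same order.
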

\begin{proof}
  We take $K_0 = 12$ and $K_1 = 2$.

  The geodesic $\ol{xq}$ stays within a \mbox{$\delta$-neighborhood} of $\ol{xp} \cup \ol{pq}$, and the geodesic $\ol{xy}$ stays within a \mbox{$\delta$-neighborhood} of $\ol{xq} \cup \ol{qy}$, hence $\ol{xy}$ stays within a \mbox{$2\delta$-neighborhood} of $\ol{xp} \cup \ol{pq} \cup \ol{qy}$. If $\ol{xy}$ does not pass within~$K_1 \delta = 2\delta$ of $\ol{pq}$, then there must be a point $z$ on $\ol{xy}$ which is within $2\delta$ of both $\ol{xp}$ and $\ol{qy}$.

  In particular, $d(x,z) \leq d(x,p) - d(z,p) + 4\delta$. There is a point~$r$ on~$\ol{pq}$ which is within $\delta$ of both $\ol{zp}$ and $\ol{zq}$, so $d(z,r) \leq d(z,p) - d(r,p) + 2\delta$. Hence,
  \begin{align*}
    d(x,r)
    &\leq d(x,z) + d(z,r)
    \\
    &\leq d(x,p) - d(z,p) + 4\delta + d(z,p) - d(r,p) + 2\delta
    \\
    &= d(x,p) - d(r,p) + 6\delta
    \\
    &\leq d(x,r) - d(r,p) + 6\delta.
  \end{align*}
  Hence, $d(r,p) \leq 6\delta$. Similarly, $d(r,q) \leq 6\delta$ so $d(p,q) \leq 12\delta = K_0 \delta$. So, if~$\ol{xy}$ does not pass within~$K_1\delta$ of~$\ol{pq}$ then $d(p,q) \leq K_0 \delta$, as desired.
\end{proof}

\end{document}